\newtheorem{Lemma}{Lemma}
\newtheorem{Theorem}{Theorem}
\newtheorem{Definition}{Definition}
\newtheorem{assumption}{Assumption}
\title{On Parallel Transport in Wasserstein Space}
\author{Xi Sisi Shen}
\begin{document}

\maketitle

\begin{abstract}
In this short note, we would like to give a construction of parallel transport for tangent cones lying in the interior of a geodesic in Wasserstein space.
We give a complete proof for the linear part of the tangent space, and show that a construction for the full tangent cones follows from some natural lemmas on Wasserstein space.
It can easily be shown that our construction is equivalent to those used in the previous literature on this subject.
\end{abstract}

\section{Introduction}
An optimal transport plan between two measures $\mu_1$ and $\mu_2$ on a manifold $N$ is given by a 
probability distribution of geodesic segments on the manifold, such that the projection of this measure onto the first endpoint gives $\mu_1$ and projection onto the second endpoint gives $\mu_2$.
The fact that it is optimal means that the total integral of the squares of the lengths of the geodesics is minimal with this requirement.
Points in the Wasserstein space of a compact manifold $N$ are measures supported on the manifold and geodesics in Wasserstein space correspond to optimal transport maps on the manifold. 
Many important facts about Wasserstein space are given in \cite{villani}. 
In particular, it is straightforward to show that any two paths in an optimal transport plan cannot cross in the interior of the transport.
We will denote the Wasserstein distance by $W_{2}$.

\begin{Definition}
We define the distance between two transport plans emanating from a given measure $\mu$ to be the integral over all points of the Wasserstein distance between the two distributions each point is sent to.
\end{Definition}
The distance between two transport plans is evidently at least the distance between the endpoints of the transport plans as measures. 
It seems likely that equality is true in the limit for optimal transport plans as the lengths of the plans is scaled to $0$ (by shortening each geodesic in the probability distribution), and the distances are normalized; 
though I presently cannot give a proof or a reference for this.

We will say a geodesic is Monge (alternatively, Monge on one endpoint) if the projection of the geodesics onto both endpoints (on one endpoint) is one-to-one for almost all geodesics. 
Since transport paths cannot cross in the interior, any segment of a geodesic that lies in the interior of a Wasserstein geodesic is Monge. 
For any measure on the manifold $N$, we can naturally consider the set of Wasserstein geodesics one of whose endpoints is that measure. 
We can define the tangent cone to be the product of the set of maximal geodesic segments emanating from this measure with $R^{\geq 0}$.
We define the distance between two elements of the tangent cone to be the $\limsup$ as $\epsilon$ tends to $0$
of $\frac{1}{\epsilon} W_{2}(\gamma_1 (\epsilon r_{1}),\gamma_2 (\epsilon r_{2}))$. 
This makes the tangent cone into a metric cone; a metric cone is simply a metric space equipped with multiplication by $R^{\geq 0}$ in a way that commutes with the metric. Hence it possesses a unique $0$ element and we can talk
about the unit ball as the set of elements of the metric cone that lie a unit distance away from this $0$ element.
Alternatively, define the ``tangent set'' to be the set of all geodesics emanating from $\mu$ with the distance between two geodesic segments given by shortening each geodesic segment by a factor of $\epsilon$, 
and dividing the Wasserstein 
distance between the resulting endpoints by $\epsilon$, then taking the $\limsup$ of the result as $\epsilon$ goes to $0$. Now if we have a metric space equipped with multiplication by $[0,1]$ that commutes with the metric structure,
we can naturally extend it to a metric cone. This way, not all elements of the tangent cone correspond to a geodesic in the ``tangent set'', but only if we multiply the element by a small enough positive number.

In this short paper, we will attempt to construct a map between
the tangent cones along a Monge geodesic that would correspond to the Wasserstein parallel transport along that geodesic.
Our motivation mainly comes from the results in \cite{Lott}, where parallel transport is constructed for a special and rather narrow class of
Wasserstein geodesics. We will give a complete construction for the linear tangent space, which is the subset of geodesic segments that can be extended in both directions as a minimizing geodesics, and we will show that a 
construction for the complete tangent cone follows from some natural lemmas on Wasserstein space. In both cases, the families of maps we construct are isometries between the appropriate spaces.

\section{Construction for the Linear Tangent Space}

\subsection{Definitions and Notations}

We will define a family of operators, closed under composition, that allow us to define parallel transport of the linear tangent space along a Monge geodesic $\mu_{t}$, 
parametrized proportionally to length from $0$ to $1$, in the Wasserstein space of a manifold.
Every element in the linear tangent space $T\mu_{t}$ at $\mu_{t}$ is a vector field in the Hilbert space $L^2(supp(\mu_{t}),\mu_{t})$. We will denote the normal space of $\mu_{t}$ in this Hilbert space
by $N\mu_{t}$, which is the closure of the space of vector fields that preserve $\mu_{t}$. For any $t_1,t_2$, we will denote the operators given by the usual parallel transport along the Monge geodesic by 
$ParT_{t_1,t_2}$ and the operator corresponding to pushforward, whenever it is defined, by $Push_{t_1,t_2}$. We note immediately that $ParT$ commutes with the Hilbert space structure of the vector
fields in $L^2(supp(\mu_{t}),\mu_{t})$, that the pushforward is always defined on any element in $N\mu_{t_1}$ and furthermore that $Push_{t_1,t_2}N\mu_{t_1}=N\mu_{t_2}$, as can be seen from
the definition of $N\mu$. Finally we will use the fact that, the difference, as operators, of $Push_{t_1,t_2}-ParT_{t_1,t_2}$ is bounded in operator norm, near $t_1$ by the first derivative of the Monge transport field.
From this, and the unitarity of parallel transport, it easily follows that the difference is bounded, globally, by $C W(\mu_{t_1},\mu_{t_2})$ where $C$ is a global constant that depends only on the first derivative
 of the Monge transport field.  

\subsection{Proof}

We consider families of operators $F_{p_i,p_j}$ defined between the spaces of vector fields in $L^2(supp(\mu_{p_i}),\mu_{p_i})$, for $\{p_{i}\}_{i=1}^{n}$ ranging on a subset of the unit interval.
We let $T$ denote the family of operators defined by $T_{p_i,p_j}=proj_{T\mu_{p_j}}ParT_{p_i,p_j}$. Given a function $F$ from the non-negative reals to the non-negative reals, we say that a family $S$
is an $F$-approximation of $T$, if for any two points $q_i$, $q_j$ between which $S_{q_i,q_j}$ is defined, 
\begin{equation*}
||S_{q_i,q_j}-T_{q_i,q_j}||\leq F(|q_j-q_i|)
\end{equation*}
in operator norm.

For any finite family of operators $S$, defined between points in a finite subset of the unit interval, we can define the \textit{homogenization} of $S$ to be the family of operators obtained by composing the operators
associated to neighboring points. The result will be a family of operators satisfying the composition property. We would like morally to define a homogenization of $T$. We will do so as follows.
\begin{Lemma}
There exists an $F$ with the property that $F(t) \leq o(t)$ s. t. the homogenization of the restriction of $T$ to any finite subset of the unit interval is an $F$-approximation of $T$.
\end{Lemma}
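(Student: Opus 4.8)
The plan is to reduce the estimate to a telescoping sum of single-step ``merge defects'' and to extract \emph{two} independent factors of smallness from each defect. Fix partition points $p_0 < p_1 < \dots < p_n$ of a subinterval $[q_i,q_j]=[p_0,p_n]$, write $P_t=proj_{T\mu_t}$ and $Q_t=I-P_t=proj_{N\mu_t}$, and abbreviate $U_{s,t}=ParT_{s,t}$, $\Phi_{s,t}=Push_{s,t}$. Interpolating between the homogenized composition $T_{p_{n-1},p_n}\cdots T_{p_0,p_1}$ and the single operator $T_{p_0,p_n}$ by replacing an initial block of factors by one operator at a time, the total error becomes a sum over $j$ of terms of the shape $G\,\bigl(T_{p_j,p_{j+1}}T_{p_0,p_j}-T_{p_0,p_{j+1}}\bigr)$, where $G$ is a product of the remaining $T$'s and hence has operator norm at most $1$, each $T$ being a projection composed with a unitary.

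A direct computation using $T_{s,t}=P_tU_{s,t}$ and the cocycle property $U_{s,t}U_{r,s}=U_{r,t}$ gives the merge defect
\begin{equation*}
T_{p_j,p_{j+1}}T_{p_0,p_j}-T_{p_0,p_{j+1}}=-\,P_{p_{j+1}}U_{p_j,p_{j+1}}Q_{p_j}U_{p_0,p_j}.
\end{equation*}
Here the first factor of smallness appears: since $\Phi_{p_j,p_{j+1}}$ maps $N\mu_{p_j}$ into $N\mu_{p_{j+1}}$ we have $P_{p_{j+1}}\Phi_{p_j,p_{j+1}}Q_{p_j}=0$, so that $P_{p_{j+1}}U_{p_j,p_{j+1}}Q_{p_j}=P_{p_{j+1}}(U_{p_j,p_{j+1}}-\Phi_{p_j,p_{j+1}})Q_{p_j}$, whose operator norm is at most $\|U_{p_j,p_{j+1}}-\Phi_{p_j,p_{j+1}}\|\le C\,|p_{j+1}-p_j|$ by the global bound $\|Push-ParT\|\le C\,W_2$ recorded above.

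The second factor of smallness comes from restricting attention to the linear tangent space, i.e.\ from feeding in $v\in T\mu_{p_0}$, which is the relevant domain for parallel transport. Using $Q_{p_j}^2=Q_{p_j}$, the $j$-th term is controlled by $\|P_{p_{j+1}}U_{p_j,p_{j+1}}Q_{p_j}\|\cdot\|Q_{p_j}U_{p_0,p_j}v\|$, and the quantity $\|Q_{p_j}U_{p_0,p_j}v\|$ vanishes at $p_j=p_0$ because $Q_{p_0}v=0$. Since $s\mapsto Q_sU_{p_0,s}$ is Lipschitz along the geodesic, $\|Q_{p_j}U_{p_0,p_j}v\|\le K\,(p_j-p_0)\,\|v\|$ for a constant $K$ depending only on the geodesic. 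Combining the two factors, the $j$-th term is bounded by $CK\,|p_{j+1}-p_j|\,(p_j-p_0)\,\|v\|$, and summing gives
\begin{equation*}
\bigl\|T_{p_{n-1},p_n}\cdots T_{p_0,p_1}-T_{p_0,p_n}\bigr\|\le CK\sum_{j}|p_{j+1}-p_j|\,(p_j-p_0)\le CK\,|p_n-p_0|^2,
\end{equation*}
uniformly in the partition, since $\sum_j|p_{j+1}-p_j|=p_n-p_0$. The same estimate applied to every sub-pair $(q_i,q_j)$ shows that $F(t)=CK\,t^2$ is an admissible choice, and plainly $F(t)=o(t)$.

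The routine parts are the telescoping identity and the first factor, which are immediate from $P\Phi Q=0$ and the recorded bound on $Push-ParT$. I expect the main obstacle to be the second factor: one must justify that $s\mapsto Q_sU_{p_0,s}$, equivalently the tangent--normal splitting together with the unitary $ParT$, varies Lipschitz-continuously with a constant uniform over the whole geodesic, and that the resulting bound is insensitive to how fine or coarse the partition is. These regularity statements are precisely the ``natural lemmas on Wasserstein space'' alluded to above; morally they encode that the tangent cone $T\mu_t$ turns with bounded speed along a Monge geodesic in the interior. A secondary point to make precise is that the comparison is meant in operator norm between the linear tangent spaces, since on normal inputs the homogenization approximates $T$ only to first order rather than to order $o(t)$.
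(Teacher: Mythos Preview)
Your overall architecture matches the paper's: telescope the homogenized product into single-step merge defects $-P_{p_{j+1}}U_{p_j,p_{j+1}}Q_{p_j}U_{p_0,p_j}$ and extract two factors of smallness. Your first factor, via $P\Phi Q=0$, is exactly the paper's device. The only inessential difference is the direction of the telescoping (you collapse an initial block, the paper collapses from the far end).

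The gap is your second factor. You assert that $s\mapsto Q_sU_{p_0,s}$ is Lipschitz on $T\mu_{p_0}$ and call this one of the ``natural lemmas on Wasserstein space alluded to above''. That attribution is mistaken: those unproven assumptions belong to the full-tangent-cone construction; the linear case is meant to be self-contained, and the paper actually \emph{proves} your Lipschitz bound from the two facts you already granted. The trick is the same $Push$ substitution you used for the first factor, applied on the other side via squaring. For $v\in T\mu_{p_0}$,
\[
\|Q_{p_j}U_{p_0,p_j}v\|^2
=\bigl\langle U_{p_0,p_j}^{-1}\,Q_{p_j}U_{p_0,p_j}v,\;v\bigr\rangle
=\bigl\langle (U_{p_0,p_j}^{-1}-\Phi_{p_0,p_j}^{-1})\,Q_{p_j}U_{p_0,p_j}v,\;v\bigr\rangle,
\]
the second equality because $\Phi^{-1}$ carries $N\mu_{p_j}$ into $N\mu_{p_0}\perp v$. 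Dividing through gives
\[
\|Q_{p_j}U_{p_0,p_j}v\|\le\|U_{p_0,p_j}^{-1}-\Phi_{p_0,p_j}^{-1}\|\,\|v\|\le C\,W_2(\mu_{p_0},\mu_{p_j})\,\|v\|,
\]
which is precisely the paper's inequality~(1) and supplies your constant $K$. With this in hand your estimate closes to $F(t)=C't^2$, identical to the paper's.

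So nothing in your proposal is wrong, but the step you flag as the main obstacle is not an external regularity input at all: it follows immediately from $\Phi N=N$ and $\|\Phi-U\|\le CW_2$ by the squaring argument above.
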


\begin{proof}
Observe first that the parallel transport operator $ParT$ is unitary and that the pushforward preserves the normal space: $Push_{q_1,q_2}{N\mu_{q_1}}={N\mu_{q_2}}$.
Let $p_1$, $p_2$, $p_3$, ... , $p_3$  be points in the unit interval. First we bound in operator norm $||ParT_{p_i,p_j}-T_{p_i,p_j}||$. We multiply this operator by its adjoint to get:
\begin{equation*}
||ParT_{p_i,p_j}-T_{p_i,p_j}||^2=
||(ParT_{p_i,p_j}-T_{p_i,p_j})^{adj}(ParT_{p_i,p_j}-T_{p_i,p_j})||_{T\mu_{p_i}}= 
\end{equation*}
\begin{equation*}
||(ParT_{p_i,p_j})^{adj}(ParT_{p_i,p_j}-T_{p_i,p_j})||_{T\mu_{p_i}}=
||ParT^{-1}_{p_i,p_j}(ParT_{p_i,p_j}-T_{p_i,p_j})||_{T\mu_{p_i}}=
\end{equation*}
\begin{equation*}
||(ParT^{-1}_{p_i,p_j}-Push^{-1}_{p_i,p_j})(ParT_{p_i,p_j}-T_{p_i,p_j})||_{T\mu_{p_i}}\leq
\end{equation*}
\begin{equation*}
||(ParT^{-1}_{p_i,p_j}-Push^{-1}_{p_i,p_j})||_{T\mu_{p_i}} ||(ParT_{p_i,p_j}-T_{p_i,p_j})||_{T\mu_{p_i}}
\end{equation*}
But $||(ParT^{-1}_{p_i,p_j}-Push^{-1}_{p_i,p_j})||\leq C W(\mu_{p_i},\mu_{p_j})$, where $C$ is a universal constant depending only on the Monge transport field. Hence the same bound holds for $||(ParT_{p_i,p_j}-T_{p_i,p_j})||$ and 
\begin{equation}
 ||(ParT_{p_i,p_j}-T_{p_i,p_j})|| \leq C W(\mu_{p_i},\mu_{p_j})
\end{equation}

Next we observe
$||T_{p_i,p_j}T_{p_j,p_k}-T_{p_i,p_k}||=||T_{p_i,p_j}(ParT_{p_j,p_k}-T_{p_j,p_k})||=||Proj_{T\mu_{p_i}}ParT_{p_i,p_j}(ParT_{p_j,p_k}-T_{p_j,p_k})||=
||Proj_{T\mu_{p_i}}(Push_{p_i,p_j}-ParT_{p_i,p_j})(ParT_{p_j,p_k}-T_{p_j,p_k})||$
$\leq||(Push_{p_i,p_j}-ParT_{p_i,p_j})||||(ParT_{p_j,p_k}-T_{p_j,p_k})||\leq C^2 W(\mu_{p_i},\mu_{p_j})W(\mu_{p_j},\mu_{p_k})$ 
\newline
\newline
where in the last line we used inequality (1).
\newline
\newline
Finally,
$||\prod\limits^{n-1}_{i=1} T_{p_{i+1},p_{i}}-T_{p_n,p_1}||=||T_{p_{n},p_{n-1}}T_{p_{n-1},p_{n-2}}\prod\limits^{n-3}_{i=1} T_{p_{i+1},p_{i}}-T_{p_n,p_1}||\leq
||T_{p_{n},p_{n-2}}\prod\limits^{n-3}_{i=1} T_{p_{i+1},p_{i}}-T_{p_n,p_1}||+||T_{p_n,p_{n-1}}T_{p_{n-1},p_{n-2}}-T_{p_n,p_n-2}||||\prod\limits^{n-3}_{i=1} T_{p_{i+1},p_{i}}||
\leq||T_{p_{n},p_{n-2}}\prod\limits^{n-3}_{i=1} T_{p_{i+1},p_{i}}-T_{p_n,p_1}||+||T_{p_n,p_{n-1}}T_{p_{n-1},p_{n-2}}-T_{p_n,p_n-2}||$
\newline
\newline
Iterating the last inequality, we obtain 
\begin{equation*}
  ||\prod\limits^{n-1}_{i=1} T_{p_{i+1},p_{i}}-T_{p_n,p_1}||\leq \sum_{i=1}^{n-2}||T_{p_n,p_{i+1}}T_{p_{i+1},p_{i}}-T_{p_{n},p_i}||
\end{equation*}

Applying the expression for the bound of $||T_{p_i,p_j}T_{p_j,p_k}-T_{p_i,p_k}||$ and for $||(Push_{p_i,p_j}-ParT_{p_i,p_j})||$, we get:
\begin{equation}
 ||\prod\limits^{n-1}_{i=1} T_{p_{i+1},p_{i}}-T_{p_n,p_1}||\leq C^2 W^2(\mu_{p_n},\mu_{p_1}) = C^2 W^2(\mu_{1},\mu_{0}) (p_n-p_1)^2
\end{equation}
which shows indeed that the homogenization of the restriction of $T$ to $\{p_i\}_{i=1}^n$ is an $F$-approximation of $T$, for $F(t)=C^2 W^2(\mu_{1},\mu_{0}) t^2$ and any finite subset $\{p_i\}_{i=1}^n$.
\end{proof}

Now suppose, for any $F$, a finite homogenous (satisfying the composition property) family of operators $S_1$ is an $F$-approximation of $T$ on some finite set of points $\{q_i\}_{i=1}^n$. Let us define the width of this set with respect to $F$ by 
\begin{equation}
 w^F(\{q_i\}_{i=1}^n)\, 
=exp\Big(\sum_{i=1}^{n-1}F(|q_{i+1}-q_i|)\Big)-1  
\end{equation}

Then every operator in $S_1$, in operator norm, lies within a ball of radius $w^F(\{q_i\}_{i=1}^n)$ of the homogenization of the restriction of $T$ to $\{q_i\}_{i=1}^n$.

\begin{proof}
This follows from the triangle inequality and the fact that the norm of every operator in $T$ is $1$, after expanding the expression for the operator in $S_1$ between two points, as a composition of operators on neighboring points,
and applying the fact that it is an $F$-approximation of $T$.
\end{proof}

Now observe that for any increasing sequence of subsets $\{q^k_i\}_{i=1}^{n_k}$ of the unit interval that increases to a dense set, the $F$-widths  $w^F_k$ of the subsets converge to $0$, for any $F(t)\leq o(t)$.
To every subset in this increasing sequence we associate the homogenization of the restriction of $T$. This will give an increasing sequence of homogenous families of operators. For any two points in the limiting subset of the unit interval, $p$ and $q$,
this will give a sequence of operators $S^i_{p,q}$. By the above, the sequence of operators will satisfy $||S^{k_2}_{p,q}-S^{k_1}_{p,q}||\leq w^F_{min(k_1,k_2)}$. Hence for any two points in the limiting sequence,
 one will obtain a convergent
sequence of operators. The convergence will be uniform. The limiting operators will provide an $F$-approximation of $T$ for any two points on which they are defined and will satisfy the composition property. Furthermore, for any 
given dense sequence, it is an easy consequence of the estimate that such a family is unique. Thus, for any two points in the unit interval, the limiting operator thus obtained, will be independent of the increasing sequence of finite subset, or the 
limiting dense subset; as can be seen from taking the union of two increasing sequences of subsets, increasing to different sets.

Finally it will follow from the fact that the operators in $T$ are locally unitary, that every operator in the limiting family of operators will be unitary. We observe finally that in \cite{Lott} a different family $\tilde{T}$ was considered. However
this family agrees with our family up to $O(t^2)$ and hence also produces an $F$-approximation to our family of operators $T$, after restricting to a sequence and homogenizing, from which we can deduce
 that it converges to the same limit.

\section{Construction for the Full Tangent Cone}
For any two points $\mu_{s}$ and $\mu_{t}$ on a Monge geodesic, we can define a map between their associated tangent cones $T_{\mu_{s}}$ and $T_{\mu_{t}}$ as follows. 
An element of the tangent cone at $T_{\mu_{s}}$ is a maximal geodesic segment and a real number $r$. 
Choose a small enough $\epsilon$ such that this geodesic segment has a proper closed geodesic subsegment $\gamma_{\epsilon r}$ of length $\epsilon r$. 
This segment defines an optimal transport plan to some measure $\nu_{s}$
in the Wasserstein space of $N$. Since we are in a Monge geodesic, and since $\gamma_{\epsilon r}$ corresponds to a probability
distribution of geodesic segments one of whose endpoints lies in $supp(\mu_s)$, we can parallel transport $\gamma_{\epsilon r}$, according to the ordinary Riemannian parallel transport on $N$, along the Monge geodesic.
This gives some new measure $\nu_{t}$ equipped with a transport map to $\mu_{t}$, which however may no longer be an optimal transport. We can consider the optimal transport map from $\mu_{s}$ to $\nu_{s}$. This will give
a geodesic segment of length $W_{2}(\mu_{t},\nu_{t})$ (at most $\epsilon r$, since the ordinary parallel transport preserves the length of a transport plan), which lies in some maximal geodesic segment.
Taking this maximal geodesic segment and taking the associated real number to be the ratio of the length of $W_{2}(\mu_{t},\nu_{t})$ to $\epsilon$ gives us an element of the tangent cone at $\mu_{t}$. 
The question of whether this gives a well-defined element of the tangent cone as $\epsilon$ is taken to $0$ is not fully evident. 
However it is, if it is the case that any transport plan emanating from a measure can always be approximated by an optimal transport for short time.
So, when its image is well-defined, we will refer to this map from $T_{\mu_{s}}$ to $T_{\mu_{t}}$ as $M_{s,t}$. This map will be the basis of our construction.



\begin{assumption}
$M_{s,t}$ is well-defined as a map between tangent cones.
\end{assumption}
We will leave this lemma unproven. However, it is obvious at least in particular cases and should be true in general.

Moreover, we will also have the following unproven lemma:

\begin{assumption}

$M_{s,t}$ is non-expanding as a map between metric spaces.
\end{assumption}
This should also be a natural consequence of basic facts on Wasserstein space and is true in particular cases. In particular, it would follow from the fact that the distance between two short Wasserstein
geodesics lying in the tangent set is equal to the distance between them as transport plans, as defined in the introduction, in the normalized $0$ scaling limit.

\begin{Definition}
Define the operator $M_{s,t}$ to be the above map between from the tangent cone to $\mu_{s}$ to the tangent cone to $\mu_{t}$.
\end{Definition}
 
Now if well-defined, this gives a family of maps between any two tangent cones in the interior of a Wasserstein geodesic. However this family of maps does not obey the natural composition property.
We will use this family of maps to construct Wasserstein parallel transport between two points in the interior of the geodesic,
by subdividing the interval between the two points into small segments and composing the operators associated to each
segment, then taking the subdivision increasing to a dense subdivision of the interval between the two points, 
and showing that the result gives a convergent sequence on every tangent cone associated to a point in the dense subdivision. For a given subdivision $S$, we will denote the family of maps between
 the endpoints of the subdivision $\{ s_{i} \}_{i=1}^{n}$, arising in this way by $M(S)_{s_{i},s_{j}}$. Roughly speaking, we will take $S$ increasing to a dense subdivision and show that for any two points $t_1,t_2$ in the dense
subdivision, $M(S)_{t_{1},t_{2}}$ converges uniformly to an isometry as $S$ approaches the dense subdivision. Convergence of maps between metric cones here refers to uniform convergence of the images of the unit ball.

\subsection{Proof}

In order to prove convergence, it will be crucial to bound the discrepancy between $M_{t_3,t_2} \circ M_{t_2,t_1} $ and $M_{t_3,t_1}$, acting on a unit element of $T\mu_{t_1}$. This will be the core of the proof.
Thus, we have the following theorem.
\begin{Theorem}
\label{thm1}
For any unit element in the tangent cone at $\mu_{t_1}$, the distance between the images under $M_{t_3,t_2} \circ M_{t_2,t_1} $ and $ M_{t_3,t_1}$ of this element in $T\mu_{t_3}$ is uniformly bounded by 
$C \,  W_{2}(\mu_{t_2},\mu_{t_3}) \,  D(t_1,t_2)$, where $C$ is a constant, that depends on the 
manifold $N$ and the Wasserstein geodesic, and where the quantity $D(t_1,t_2)$ will be defined in the course of the proof.
\end{Theorem}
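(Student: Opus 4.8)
The plan is to mimic, at the level of the full tangent cone, the composition estimate proved for the linear tangent space in Lemma 1. There the decisive step isolated the failure of composition into a single intermediate projection, $T_{p_i,p_j}T_{p_j,p_k}-T_{p_i,p_k}=T_{p_i,p_j}(ParT_{p_j,p_k}-T_{p_j,p_k})$, and then controlled it by the bound $\|Push-ParT\|\le C\,W_{2}(\mu_{t_1},\mu_{t_2})$. I would reproduce this structure by first observing that $M_{s,t}$ is itself a composite of two operations: (i) Riemannian parallel transport of the transport plan representing a cone element along the Monge geodesic, which is the nonlinear analogue of $ParT$ and obeys the semigroup law, so that transporting through $t_2$ agrees with transporting directly; and (ii) the replacement of the resulting (generally non-optimal) plan by the optimal transport to the same target measure, which is the nonlinear analogue of $proj_{T\mu}$. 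The entire discrepancy between $M_{t_3,t_2}\circ M_{t_2,t_1}$ and $M_{t_3,t_1}$ therefore arises from the intermediate optimal-transport projection performed at $\mu_{t_2}$ in the composed map but absent in the direct one.

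First I would fix a unit element and, for small $\epsilon$, represent it by a closed geodesic subsegment $\gamma_{\epsilon}$ of length $\epsilon$, giving a measure $\nu^{\epsilon}_{t_1}$ with an optimal plan from $\mu_{t_1}$. Parallel transporting this plan along the Monge geodesic produces measures $\nu^{\epsilon}_{t_2}$ and $\nu^{\epsilon}_{t_3}$, and by the semigroup property of Riemannian parallel transport on $N$ the plan arriving at $\mu_{t_3}$ is the same whether taken directly or through $t_2$. The direct map $M_{t_3,t_1}$ applies the optimal-transport projection only once, at $t_3$. The composed map $M_{t_3,t_2}\circ M_{t_2,t_1}$ instead first projects at $t_2$, replacing $\nu^{\epsilon}_{t_2}$ by the endpoint $\tilde\nu^{\epsilon}_{t_2}$ of the optimal plan from $\mu_{t_2}$, then parallel transports \emph{this corrected} plan to $t_3$ and projects again. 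Thus the two images of our element in $T\mu_{t_3}$ result from applying the single operation ``parallel transport $t_2\to t_3$ followed by optimal projection at $t_3$'' to two plans at $t_2$ that differ precisely by the $t_2$-projection.

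I would then define $D(t_1,t_2)$ to be the supremum over unit elements of $\limsup_{\epsilon\to0}\tfrac1\epsilon W_{2}(\nu^{\epsilon}_{t_2},\tilde\nu^{\epsilon}_{t_2})$, i.e. the size of the correction the optimal-transport projection makes at $t_2$ to the plan arriving from $t_1$; this is the exact cone analogue of $\|ParT_{t_1,t_2}-T_{t_1,t_2}\|$, and by inequality (1) it is itself $O(W_{2}(\mu_{t_1},\mu_{t_2}))$. With this in hand the estimate follows by bounding how the $t_2$-defect propagates to $t_3$. By Assumption 2 the optimal projection at $t_3$ is non-expanding, so it does not enlarge the defect; the remaining point is that parallel transporting the two $t_2$-plans to $t_3$ and projecting does \emph{not} merely preserve their distance but contracts the error by a factor $O(W_{2}(\mu_{t_2},\mu_{t_3}))$, because to first order parallel transport and the optimal-transport projection commute, their non-commutation being governed by the discrepancy $Push-ParT$ over $[t_2,t_3]$, which is $O(W_{2}(\mu_{t_2},\mu_{t_3}))$ by the global bound on $\|Push-ParT\|$ stated earlier. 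Combining these yields the claimed bound $C\,W_{2}(\mu_{t_2},\mu_{t_3})\,D(t_1,t_2)$.

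The main obstacle is this last step: extracting the factor $W_{2}(\mu_{t_2},\mu_{t_3})$ rather than a mere $O(1)$ from Assumption 2 alone. In the linear proof this gain came for free from the identity $proj_{T\mu}ParT=Push-(Push-ParT)$ together with $Push\,N\mu=N\mu$; reproducing it on the cone demands a nonlinear analogue asserting that optimal-transport projection and parallel transport commute to first order along a Monge geodesic, which is where the Monge hypothesis and the non-crossing of transport paths in the interior must enter. Secondary difficulties are interchanging the limit $\epsilon\to0$ with the compositions, which rests on Assumption 1, and verifying that the bound is uniform over all unit elements of $T\mu_{t_1}$; both should fall out once the first-order commutation is in place.
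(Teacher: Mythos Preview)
Your high-level decomposition is right: the whole discrepancy comes from the intermediate optimal-transport ``projection'' at $\mu_{t_2}$, and the semigroup property of Riemannian parallel transport along the Monge geodesic makes the direct and through-$t_2$ transports agree before that projection. But two points separate your proposal from the paper's actual argument.

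First, your definition of $D(t_1,t_2)$ is vacuous as written. The projection at $t_2$ does not change the endpoint measure: the optimal plan from $\mu_{t_2}$ to $\nu^{\epsilon}_{t_2}$ still has $\nu^{\epsilon}_{t_2}$ as its endpoint, so $\tilde\nu^{\epsilon}_{t_2}=\nu^{\epsilon}_{t_2}$ and your $W_2(\nu^{\epsilon}_{t_2},\tilde\nu^{\epsilon}_{t_2})$ is identically zero. What the projection changes is the \emph{plan}: you have two couplings $P_1$ (the parallel-transported, generally non-optimal plan) and $P_2$ (the optimal plan) between the same pair $\nu_2,\mu_{t_2}$. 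The paper's $D(t_1,t_2)$ is $\sup\limsup_{\epsilon\to0}\tfrac1\epsilon\,W_2(P_1,P_2)$, where $W_2(P_1,P_2)$ is the plan-distance of Definition~1 (the integral over points of the Wasserstein distance between the two conditional distributions), not a distance between endpoint measures. Also, the paper does \emph{not} claim $D(t_1,t_2)=O(W_2(\mu_{t_1},\mu_{t_2}))$; inequality~(1) lives in the linear setting and does not transfer. That $D$ is small is precisely the content of Assumption~3, left unproven.

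Second, the factor $W_2(\mu_{t_2},\mu_{t_3})$ does not come from an abstract ``$Push-ParT$ commutator'' as in the linear lemma; the paper extracts it by a direct pointwise computation on $N$. Each point $p\in\mathrm{supp}\,\nu_2$ is carried to $\nu_3$ (resp.\ $\tilde\nu_3$) by flowing for time $|t_3-t_2|$ along the geodesic whose initial velocity is the Monge velocity field $\mu'_{t_2}$, evaluated at $P_1(p)$ (resp.\ $P_2(p)$) and parallel-transported back to $p$. Up to $O(\epsilon^2)$ curvature/holonomy terms (the area of the geodesic triangle with vertices $p,P_1(p),P_2(p)$), the difference of these two velocity vectors at $p$ is bounded by the covariant derivative of $\mu'_{t_2}$ times the length of the segment joining $P_1(p)$ to $P_2(p)$. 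Integrating over $p$ gives a velocity discrepancy of order $C\cdot W_2(P_1,P_2)$; multiplying by the flow time $|t_3-t_2|\sim W_2(\mu_{t_2},\mu_{t_3})$ yields $W_2(\nu_3,\tilde\nu_3)\le C\,W_2(\mu_{t_2},\mu_{t_3})\,W_2(P_1,P_2)+O(\epsilon^2)$. Dividing by $\epsilon$ and passing to the limit gives the theorem. So the gain of $W_2(\mu_{t_2},\mu_{t_3})$ is simply ``velocity error times duration,'' with the velocity error governed by the first derivative of the Monge transport field --- this is the nonlinear incarnation of the $\|Push-ParT\|$ bound, but it enters through the covariant derivative of $\mu'_{t_2}$ rather than through any commutation of projection with transport.
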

Take a unit element of $T\mu_{t_1}$ and choose an $\epsilon$ to obtain a geodesic segment of length $\epsilon$ emanating from $\mu_{t_1}$ and joining it to some measure $\nu_{1}$. Parallel transport the transport map joining
$\mu_{t_1}$ and $\nu_{1}$ to $\mu_{t_2}$, to obtain a measure $\nu_{2}$, equipped with a transport map to $\mu_{t_2}$. This transport map $P_{1}$ associates each point in the support of $\nu_{2}$ to an element in the support of
$\mu_{t_2}$, possibly non-uniquely. Consider the optimal transport plan from $\nu_{2}$ to $\mu_{t_2}$. This will give another transport plan $P_{2}$ from $\nu_{2}$ to $\mu_{t_2}$. 

We can consider the distance between the two plans $P_{1}$ and $P_{2}$ in the sense defined above. In particular, the ratio of this distance to $\epsilon$ as $\epsilon$
is taken to $0$ will be an important parameter in the proof, and will, in a sense measure the angle between $P_{1}$ and the measure. 
It will be important for us to have a mild bound on this distance in order to prove convergence. 

Applying parallel transport to the first transport plan now yields a measure $\nu_3$ along with a transport plan to $\mu_{t_3}$, and applying parallel transport to the second transport plan yields a measure  $\tilde{\nu}_3$.
In order to bound the resulting discrepancy between the projections onto the tangent cone $T\mu_{t_3}$, it is sufficient to bound the Wasserstein distance between $\tilde{\nu}_3$ and $\nu_3$.
This is fairly straighforward, given a bound on the distance between the above two transport plans. Every point in the support of $\nu_2$ travels along a probability distribution of curves to the supports of $\tilde{\nu}_3$
and $\nu_3$.
These curves are not geodesics, in general. However they are in Euclidean space; moreover, in that case, the directions of these geodesics are given by parallel transporting, along $P_{1}$ and $P_{2}$, the vector field
that gives the instaneous transport $\mu'_{t_2}$ of $\mu_{t_2}$ along the Wasserstein geodesic $\mu_{t}$. Replacing the curves by these geodesics in the non-Euclidean case will change the result by an amount that is
uniformly quadratic in 
$W_2(\mu_{t_1},\nu_1)=\epsilon$, and will hence vanish in the limit when we take $\epsilon$ to $0$. The discrepancy between $\tilde{\nu}_3$ and $\nu_3$ can hence be measured by the discrepancy of the transport plans corresponding to
these two sets of distributions of geodesics. The first set is obtained by parallel transporting $\mu'_{t_2}$ along the first transport plan, the second by transporting $\mu'_{t_2}$ by the second transport plan. 
We can compute this discrepancy, as follows. We have two sets of probability distributions of geodesics. The first arises by parallel transporting the geodesics that form the infinitesimal transport $\mu'_{t_2}$ by $P_{1}$; 
the second arises by parallel transporting this same set by $P_{2}$. Each point $p$ in the support of $\nu_2$ is mapped to a probability distribution of points in $\mu_{t_2}$ by $P_{1}$ and to another distribution by $P_{1}$. 
Each of these distributions will give a distributions of vectors in the tangent space at $p$. The optimal transport plan between the two distributions of points will lift to a transport plan between the two distributions of vectors.
The optimal transport plan between the two distributions of points is a distribution of geodesic segments each of which we lift to a difference of two vectors in $T_{p}N$. Let us understand in detail the lifting of each segment
The two vectors from the lifting will correspond to the evaluation of the vector field
$\mu'_{t_2}$ at one endpoint of the geodesic segment, parallel transported to $p$, and the evaluation of $\mu'_{t_2}$ at the other endpoint of the segment, parallel transported to $p$. We can evaluate this difference at $p$;
we can also evaluate this difference after parallel transporting both vectors to one of the endpoints of the segment, and then along the segment; this is true because parallel transport is norm-preserving.
Hence, one of the vectors will have travelled all around the geodesic triangle, and
the other vector will have come back to its point of origin and then gotten parallel transported to the other endpoint of the segment. 
This shows that we can bound the difference between these two vectors by $(1)$ the failure of the
evaluations of $\mu'_{t_2}$ at the two endpoints to be parallel transports of each other along the segment, and $(2)$ the deviation resulting from the parallel transport of one of the vectors all around the geodesic triangle. 
The second quantity  $(2)$ is controlled by a constant that depends on the curvature of the manifold $N$, multiplying the area of the geodesic triangle in the construction. After summing over all geodesic triangles and all $p$,
this curvature term will be quadratic in $W_2(\mu_{t_1},\nu_1)$,
and will hence vanish in the limit when we take $\epsilon$ to $0$, and divide by $\epsilon$.
The first term  $(1)$ is the intergral over all geodesic segments joining $P_1$ and $P_2$ of the covariant derivative of the vector field $\mu'_{t_2}$; 
this yields the sum of the lengths of the geodesic segments multiplied by a quantity that
uniformly controls the variation of $\mu'_{t_2}$, which we can take to be a universal constant.
After summing the lengths of all the geodesic segments, we obtain the Wasserstein distance between the two transport maps.
Thus, there exists a uniform constant that depends only on $\mu_{t}$ and on the manifold
such that the Wasserstein distance between $\tilde{\nu}_3$ and $\nu_3$ is bounded above by this constant, multiplied by the distance between $\mu_{t_2}$ and $\mu_{t_3}$ and the discrepancy
between the two transport plans joining $\mu_{t_2}$ and $\nu_{2}$, up to terms that are quadratic in $\epsilon$.
Dividing the result by $\epsilon$ shows that the discrepancy between the images of $M_{t_3,t_2} \circ M_{t_2,t_1}$
and $M_{t_3,t_1}$, is controlled up to absolute constants, by the product of $W_2(\mu_{t_2},\mu_{t_3})$ and the limit of the discrepancy of the transport plans, $W_{2}(P_{1},P_{2})$ divided by $\epsilon$. 
We will call the latter quantity ($\limsup \frac{1}{\epsilon} W_{2}(P_{1},P_{2})$) $D(t_1,t_2)$ after taking the supremum over all unit elements in the tangent cone at $\mu_{t_1}$. 
Hence we can conclude that the discrepancy between $\tilde{\nu}_3$ and $\nu_3$ and hence between the images of $M_{t_3,t_2} \circ M_{t_2,t_1}$
and $M_{t_3,t_1}$ is that given in the theorem.
\begin{Definition}
For any segment $t_1,t_2$, define $\bar{D}(t_1,t_2)$ to be the maximum of $D(s_1,s_2)$ over all $\{ s_1,s_2 \}$ lying between $t_1$ and $t_2$.
\end{Definition}

The rest is straightforward. Knowing the above discrepancy and using the fact that all maps $M_{s,t}$ are non-expanding, we can determine the size of the discrepancy between 
$M_{t_1,t_n}$ and $\prod\limits_{i=1}^{n-1} M_{t_{n-i},t_{n-i+1}}$, by taking out one $t_i$ at a time. If we start from $t_{n-1}$ and continue until we remove $t_2$, the error coming from each step, will be
 $W_2(\mu_{t_i},\mu_{t_{i+1}})$
multiplied by $D(t_1,t_i)$, multiplied by the universal constant. Thus the total error will be a product of $\bar{D}(t_1,t_n)$, $W_2(\mu_{t_1},\mu_{t_{n}})$ and the universal constant. This quantity does not depend
on the intermediate points and we will call it the total potential error associated with a segment.

\begin{Lemma}
Now let $S$ be a subdivision of the interval from $s$ to $t$ and let $T_{1}$ and $T_{2}$ be two other subdivisions containing $S$. After taking iteratively the compositions of $M_{\cdot,\cdot}$ along $T_1$ and $T_2$,
the discrepancy between the two resulting images of any unit vector on a tangent cone associated to $S$ will be bounded above by twice the sum of the total potential errors over all segments of $S$. 
Call this the width of the subdivision. Notice that if we add more points to a subdivision, its width cannot increase.
\end{Lemma}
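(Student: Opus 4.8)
The plan is to prove the bound by comparing each of the two refinements $T_1$ and $T_2$ \emph{separately} against the coarse subdivision $S$, and then invoking the triangle inequality through $S$; this is precisely what produces the factor of two in the statement. The one ingredient already in hand is the per-segment estimate established immediately above: for any segment $[s_k,s_{k+1}]$ of $S$ and any subdivision of it, the composition of the maps $M_{\cdot,\cdot}$ along that subdivision differs, on a unit element, from the single map $M_{s_k,s_{k+1}}$ by at most the total potential error of $[s_k,s_{k+1}]$, namely $C\,\bar{D}(s_k,s_{k+1})\,W_2(\mu_{s_k},\mu_{s_{k+1}})$, a quantity independent of the intermediate points.

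First I would fix a unit element $v$ at the left endpoint of $S$ and let $T$ be either refinement. Writing $S=\{s_0<\cdots<s_m\}$, I set $B_k=M_{s_k,s_{k+1}}$ and let $A_k$ denote the composition of $M_{\cdot,\cdot}$ along the part of $T$ lying in $[s_k,s_{k+1}]$, so that the $T$-composition is $A_{m-1}\circ\cdots\circ A_0$ and the $S$-composition is $B_{m-1}\circ\cdots\circ B_0$. Because the maps $M_{\cdot,\cdot}$ are not linear, the comparison must be carried out metrically rather than by subtracting operators. I would introduce the hybrid compositions $H_j$ that apply $A_0,\dots,A_{j-1}$ first and $B_j,\dots,B_{m-1}$ afterward, so that $H_0$ is the full $S$-composition and $H_m$ is the full $T$-composition. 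Consecutive hybrids differ only in swapping $B_j$ for $A_j$ at the $j$-th slot; writing $w=A_{j-1}\circ\cdots\circ A_0(v)$ for the intermediate image at $\mu_{s_j}$, non-expansiveness of the trailing block $B_{m-1}\circ\cdots\circ B_{j+1}$ gives $d(H_j(v),H_{j+1}(v))\le d(B_j w, A_j w)$. Since each $A_i$ is non-expanding, $w$ has norm at most one, and homogeneity of the metric cone lets me reduce to the unit-element case and apply the per-segment estimate, bounding this by the total potential error of $[s_k,s_{k+1}]$.

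Summing the telescope over $j$ then yields $d\bigl(M(T)(v),M(S)(v)\bigr)\le \sum_{k=0}^{m-1} C\,\bar{D}(s_k,s_{k+1})\,W_2(\mu_{s_k},\mu_{s_{k+1}})$, i.e.\ exactly one half of the asserted width. Applying this to $T=T_1$ and to $T=T_2$ and using the triangle inequality through the common image $M(S)(v)$ produces the factor of two and proves the bound; the identical argument applies with the source taken at any point of $S$. I expect the only genuinely delicate point to be the metric telescoping itself: one must avoid treating the $M_{\cdot,\cdot}$ as linear operators and instead lean on the non-expansiveness assumption at every swap, together with the cone homogeneity needed to reduce from a norm-$\le 1$ intermediate image $w$ back to the unit-element form of the total-potential-error estimate.

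Finally, for the monotonicity of the width under refinement, I would split a single segment $[s_k,s_{k+1}]$ at an interior point $s'$ and compare contributions. Two facts close this at once: $\bar{D}$ is monotone under passing to subsegments, so $\bar{D}(s_k,s')$ and $\bar{D}(s',s_{k+1})$ are each at most $\bar{D}(s_k,s_{k+1})$; and $W_2$ is additive along the geodesic $\mu_t$, so $W_2(\mu_{s_k},\mu_{s'})+W_2(\mu_{s'},\mu_{s_{k+1}})=W_2(\mu_{s_k},\mu_{s_{k+1}})$. Hence the two new contributions sum to at most the old one, and iterating over added points shows the width cannot increase.
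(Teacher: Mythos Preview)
Your proposal is correct and fills in precisely the argument the paper declines to give: the paper's entire proof of this lemma is the sentence ``The proof is obvious.'' The telescoping via hybrid compositions $H_j$, combined with the non-expansiveness of the trailing block and the per-segment total-potential-error bound established just above, is exactly the natural route the paper's setup is pointing toward, and your use of the triangle inequality through $M(S)(v)$ is what produces the factor of two. Your monotonicity argument (via monotonicity of $\bar D$ on subsegments and additivity of $W_2$ along the geodesic) is likewise the intended one.

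One small remark: when you invoke cone homogeneity to pass from the intermediate image $w$ of norm at most $1$ back to the unit-element form of the per-segment estimate, you are implicitly using that the maps $M_{\cdot,\cdot}$ commute with the $\mathbb{R}^{\ge 0}$-action on the tangent cone. The paper never states this explicitly, but it is implicit in the construction of $M_{s,t}$ (the scaling parameter $r$ only enters through the length $\epsilon r$ of the representing segment, and one takes $\epsilon\to 0$), so this is not a gap so much as a point worth making explicit---which you essentially do.
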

The proof is obvious.

We say that a dense subdivision has $0$ width if there exists a sequence of finite subdivisions that increase to it that have widths tending to $0$. The requirement of $0$ widths however, by the above monotonicity observation does
not depend on the increasing subsequence. 
\begin{assumption}
\label{assn3}
We will assume that the interval $[s,t]$ on which we wish to construct parallel transport admits a dense subdivision of $0$-width.
\end{assumption}
This would follow from some weak estimates, that I however at present cannot show.

Now take an interval $[\mu_{s}, \mu_{t}]$, lying inside a Wasserstein geodesic. Take a family of subdivisions of $[s,t]$ increasing to a dense subdivision. To every subdivision associate a family of maps between any two tangent cones associated
to points in the subdivision by composing elements in the family of maps $M$. The discrepancy between the maps arising from any two subdivisions, when applied to unit elements will be bounded above by the width of any
earlier subdivision. Thus if the limit of the widths is $0$, this yields uniformly Cauchy sequences on each tangent cone associated to a point in the dense subdivision. In this case, again,
we say that the limiting dense subdivision has $0$ width. Moreover, if we have two subdivisions that yield a limit in this way, taking their union shows that the limits must coincide. 
Hence on any point shared by two subdivisions with $0$ width, the constructions of parallel transport arising from the two subdivisions will coincide. 
Hence the construction yields a well-defined parallel transport map between any two points lying in a segment $(\mu_s,\mu_t)$ contained in the interior of a Wasserstein geodesic.
Naturally, if there exists at least one dense subdivision of $0$ width, we can add any point to it without increasing its width, and hence every point inside the segment will lie in a dense subdivision of $0$ width.
Thus, assuming there exists at least one dense subdivision of $0$ width of a segment, there exists a uniquely defined parallel transport map in the interior of that segment. 
\newline
\newline

Since it is constructed as a limit of
non-expanding maps, the resulting Wasserstein parallel transport map must be non-expanding. Moreover, if we have at least one subdivision of $0$ width,
we can always construct parallel transport maps in both directions. Noticing that Theorem \ref{thm1} continues to hold when we substitute $t_3=t_1$, we can see that the composition of the forward and backward transport maps
must yield the identity in the limit, i.e.:

\begin{Lemma}
 The Wasserstein parallel transport map constructed above is an isomorphism, whose inverse can be obtained by applying the same construction in the reverse direction.
\end{Lemma}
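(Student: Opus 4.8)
The plan is to show that the forward transport map $P_{s,t}$ and the backward transport map $P_{t,s}$ (each constructed as a limit of homogenized compositions of the $M_{\cdot,\cdot}$ along a dense $0$-width subdivision) satisfy $P_{t,s} \circ P_{s,t} = \mathrm{Id}$ on each tangent cone associated to a point of the subdivision. First I would note that the forward map is the uniform limit of compositions $\prod_i M_{t_{i},t_{i+1}}$ and the backward map is the uniform limit of the reversed compositions $\prod_i M_{t_{i+1},t_{i}}$ over the same increasing family of subdivisions; since convergence of both families is uniform on the unit ball, the composition $P_{t,s}\circ P_{s,t}$ is the uniform limit of the composed finite products. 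So it suffices to control, on each finite subdivision, the discrepancy between $M_{t_{i},t_{i+1}}\circ M_{t_{i+1},t_{i}}$ and the identity, and to propagate that control through the composition using non-expansiveness.

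The key input is Theorem~\ref{thm1} in the degenerate case $t_3=t_1$, as the author signals: setting $t_3=t_1$ gives a uniform bound of the form $C\, W_2(\mu_{t_2},\mu_{t_1})\, D(t_1,t_2)$ on the distance between $M_{t_1,t_2}\circ M_{t_2,t_1}$ applied to a unit element and that same element. This says each adjacent round trip deviates from the identity by an error that is (at worst) comparable to the total potential error on the corresponding segment. I would then telescope: writing the round trip over the whole subdivision as an alternating composition and removing one interior point $t_i$ at a time, exactly as in the argument preceding Definition for $\bar D$, the non-expansiveness of every $M_{s,t}$ (Assumption~2) ensures that the error accumulated at each removal is not amplified by later compositions. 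Summing, the total discrepancy between $P_{t,s}\circ P_{s,t}$ restricted to a finite subdivision and the identity is bounded by the width of that subdivision, in the sense of the preceding Lemma.

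Finally, taking the subdivisions increasing to a dense subdivision of $0$ width (which exists by Assumption~\ref{assn3}), the widths tend to $0$, so the bound on $\|P_{t,s}\circ P_{s,t} - \mathrm{Id}\|$ on the unit ball vanishes in the limit; hence $P_{t,s}\circ P_{s,t}=\mathrm{Id}$, and symmetrically $P_{s,t}\circ P_{t,s}=\mathrm{Id}$. Since both maps are non-expanding and mutually inverse, each is an isometry, which establishes that the parallel transport map is an isomorphism with the claimed inverse. I expect the main obstacle to be the telescoping step: one must verify that applying $M_{t_3,t_2}\circ M_{t_2,t_1}\approx M_{t_3,t_1}$ in the $t_3=t_1$ specialization genuinely controls the \emph{round-trip} error and not merely a one-directional composition error, since the maps $M_{s,t}$ need not be exact inverses at finite scale and the discrepancy accumulates through the alternating product. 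The non-expansiveness assumption is what makes this accumulation additive rather than compounding, so the argument hinges on invoking Assumption~2 at each step and on the fact that $\bar D(t_1,t_n)$ is controlled uniformly along the segment.
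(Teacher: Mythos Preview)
Your proposal is correct and follows essentially the same route as the paper: both specialize Theorem~\ref{thm1} to $t_3=t_1$ to bound each adjacent round trip $M_{\cdot,\cdot}\circ M_{\cdot,\cdot}$ by $C\,W_2(\mu_{t_1},\mu_{t_2})\,D(t_1,t_2)$, telescope the full round-trip composition using non-expansiveness so that the total deviation from the identity is bounded by the width of the finite subdivision, and then pass to the limit along a $0$-width dense subdivision. The paper phrases the final passage to the limit slightly differently---tracking a specific element $\gamma_1$ and bounding two errors $Err_1$, $Err_2$ separately---but this is equivalent to your direct uniform-limit argument for $P_{t,s}\circ P_{s,t}$.
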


Here is a detailed proof:

\begin{proof}
Theorem \ref{thm1} implies that the distance between any element in the unit ball of the tangent cone at $\mu_{t_1}$ and its image under $M_{t_2,t_1} \circ M_{t_1,t_2}$ is bounded above by
 $C W_{2}(\mu_{t_1},\mu_{t_2}) D(t_1,t_2)$.  Hence the distance between the images of such an element under the maps $\prod\limits_{i=1}^{n-1} M_{t_{n-i},t_{n-i+1}} \circ \prod\limits_{i=1}^{n-1} M_{t_{i},t_{i+1}}$ and
 $\prod\limits_{i=1}^{n-2} M_{t_{n-i},t_{n-i+1}} \circ \prod\limits_{i=1}^{n-2} M_{t_{i},t_{i+1}}$ is bounded above by $C W_{2}(\mu_{t_{n-1}},\mu_{t_{n-2}}) D(t_{n-1},t_{n-2})$. Hence,
the distance between $\prod\limits_{i=1}^{n-2} M_{t_{n-i},t_{n-i+1}} \circ \prod\limits_{i=1}^{n-2} M_{t_{i},t_{i+1}}$ and the identity is bounded by the width of the partition $\{t_i\}_{i=1}^{n}$. Using this fact,
 suppose we have a segment $[s,t]$,
lying inside a Wasserstein geodesic, and we use a dense subdivision  (of $0$ width) of this segment to construct an element $\gamma_2$ in the tangent cone of $\mu_t$ from an element $\gamma_1$ in the unit tangent cone to $\mu_s$.
Then a finite subdivision $S$ of the interval that approximates the dense subdivision, will yield an operator that approximates the Wasserstein parallel transport arising from the dense subdivision in both directions. Hence,
in the forward direction, the image of $\gamma_{1}$ under this operator will be approximately $\gamma_{2}$, with an error $Err_{1}$; 
moreover, the distance between $\gamma_1$ and its image under the composition of the forward and reverse operators constructed from $S$
will be bounded by the width of the finite subdivision, with an error $Err_{2}$. Thus the deviation of $\gamma_1$ from being the image of $\gamma_2$ under the reverse operator associated to the finite subdivision will be bounded
by the sum of $Err_{1}$ and $Err_{2}$. As the finite subdivision is taken to approach the dense subdivision, both errors will tend to $0$.
\end{proof}

Finally, since the forward and reverse parallel transport maps we have constructed are inverses of each other, and since both are non-expanding, both must be isometries. 
Again, this is all conditional on there existing a subdivision of width
$0$; however again, this would follow from very weak estimates on the quantity $D(t_1,t_2)$; it would certainly follow from the fact that $D(t_1,t_2)$ uniformly goes to $0$ as $W(\mu_{t_1},\mu_{t_2})$ 
goes to $0$.


\end{document}